\tikzstyle arrowstyle=[scale=1]
\tikzstyle directed=[postaction={decorate,decoration={markings,
    mark=at position .65 with {\arrow[arrowstyle]{stealth}}}}]
\newcommand{\intL}{\int\limits }
\newcommand{\half}{^\infty_0 }
\newcommand{\RR}{\mathbb{R}}
\newcommand{\NN}{\mathbb{N}}
\newcommand{\uu}{\mathbf{u}}
\newcommand{\xx}{\mathbf{x}}
\newcommand{\yy}{\mathbf{y}}
\newcommand{\ww}{\mathbf{w}}
\newcommand{\bbeta}{\boldsymbol{\beta}}
\newcommand{\aalpha}{\boldsymbol{\alpha}}
\newcommand{\XXi}{\boldsymbol{\Xi}}
 \newcommand{\rmd}[1]{\mathrm d#1}
  \newcommand{\Co}{\mathbf  C}
  \newcommand{\Po}{\mathbf  P}
  \newcommand{\sph}{\mathbb S}
  \newcommand{\cone}{\operatorname{conv}}
\newtheorem{thm}{Theorem}
\newtheorem{defi}[thm]{Definition}
\newtheorem{rmk}[thm]{Remark}
\newtheorem{lem}[thm]{Lemma}
\title{The conical Radon transform with  vertices on triple lines}
 \date{}
\author{Sunghwan Moon\thanks{corresponding author}}
\affil{Department of Mathematics, Kyungpook National University, \authorcr
Daegu 41566, Republic of Korea ({\tt sunghwan.moon@knu.ac.kr})}
 \author{Markus~Haltmeier}
\affil{Department of Mathematics, University of Innsbruck,
Technikerstrasse 13, \authorcr6020 Innsbruck, Austria ({\tt markus.haltmeier@uibk.ac.at})}
\begin{document}
\maketitle 
\begin{abstract}
We study the inversion of  the conical Radon which integrates a function in three-dimensional  space from integrals over circular cones.  The conical Radon recently got significant attention due to its relevance in various imaging applications such as Compton camera imaging and single scattering optical tomography.
The unrestricted conical Radon transform is over-determined because the manifold of all cones depends on six variables: the center position, the axis orientation and the  opening angle of the cone. 
In this work, we consider a particular  restricted transform using triple line sensor where integrals over a three-dimensional set of cones  are collected,  determined by a one-dimensional vertex set, a one-dimensional set of  central axes, and the one-dimensional set of opening angle.  As the main result in this paper we 
derive an analytic inversion formula for the restricted conical Radon transform. 
Along that way we  define a certain ray transform adapted to the triple line sensor 
for which we  establish an analytic inversion formula.    
\end{abstract}

\smallskip\noindent{\bf Keywords.}
Conical Radon transform, Compton camera, inversion, reconstruction formula.   

\smallskip\noindent{\bf AMS classification numbers.}
44A12; 65R10; 92C55

\section{Introduction}

The conical Radon transform maps a function 
$f \colon \RR^3 \to \RR$ in three-dimensional space  to its integrals over one-sided  circular cones,
\begin{equation*}
 \Co f (\uu, \bbeta,\psi) =  \int\limits_{\sph^2}\int\limits^\infty_0 f(\uu + r\aalpha )r \delta(\aalpha
\cdot \bbeta - \cos\psi) \rmd r {\rm d}S(\aalpha) 
\end{equation*}
for $(\uu, \bbeta,\psi) \in \RR^3 \times \sph^2 \times [0, \pi ]$.
Here the cones of which the function is integrated are described by the vertex  
$\uu \in \RR^3$, the central axis $\bbeta \in \sph^2$  and the opening angle 
$\psi \in [0, \pi]$, and $\delta$ denotes the one-dimensional delta-distribution.  
Inverting the unrestricted  conical Radon transform is over-determined as 
$\Co f $  depends on six variables whereas the unknown function only depends on three.
Various forms of conical Radon transforms arise by restricting to certain subsets of cones. 
Several inversion formulas for various types of conical transforms have derived in~\cite{baskozg98,cebeiromn15,creeb94,gouia14,gouiaa14,haltmeier14,maxim2013filtered,maximfp09,moonip15,moonsiims16,moonsima16,nguyentbd04,nguyentg05,schiefenederh17,smith05,terzioglu15,truongnz07}.
Also, as a special two-dimensional version of the conical Radon transform, the $V$-line transform is also studied in \cite{moonieee17,mooncma13}.
For a recent  review  of conical Radon transforms see \cite{ambartsoumian20197,terzioglukk18,nguyent18}.
In this paper we  restrict the cones of integration  to  a three dimensional submanifold  of conical surfaces associated to  linear detector where vertices and the axes directions are restricted to one dimension.

\begin{figure}[htb!]
\begin{center}
  \begin{tikzpicture}[>=stealth]
  

    \draw[black, thick] (-3,-0.2)--(7,-0.2)--(10,0.6)--(7+3/8,0.6);
   \draw[black, thick] (0-3/8,0.5)--(-3,-0.2)  ;
   \draw[black,dashed,thick] (0-3/8,0.5)--(0,0.6)--(7+3/8,0.6);
      \draw[black, thick] (-3,0.5) -- (7,0.5)--(10,1.3)--(0,1.3)--(-3,0.5) ;
   
   \draw[blue,directed,thick] (3.5+1.5/1.4141,3.5-1.5/1.4141) -- (1,1) ;
   \draw[blue,thick] (1,1) -- (3.5-1.5/1.4141,3.5+1.5/1.4141) ;
   \draw[thick,->,red] (0.1,0.1) -- (3.5,3.5) ;
      
      \fill[black]  (1,1) circle (0.08);
      \fill[black]  (0.1,0.1) circle (0.08);
      
   \draw[blue,rotate around={-45:(3.5,3.5)},dashed,thick] (5,3.5) arc (0:360:1.5cm and 0.5cm);

   \draw (2.1,2.1) arc (30:80:21pt);
   \node at (5,1) {scattering plane};
   \node at (5,0.1) {absorption plane};
   \node at (0.7,1.1) {$\uu$};
   \node at (-0.2,0.2) {$\uu_a$};
   \node at (3.7,3.4) {$\bbeta$};
   \node at (1.7,2) {$\psi$};
   
     \fill[green!50!black]  (3.5+1.5/1.4141,3.5-1.5/1.4141) circle (0.08);

  \end{tikzpicture}
  \caption{Schematic representation of a standard Compton camera}\label{fig:compton}
\end{center}
\end{figure}
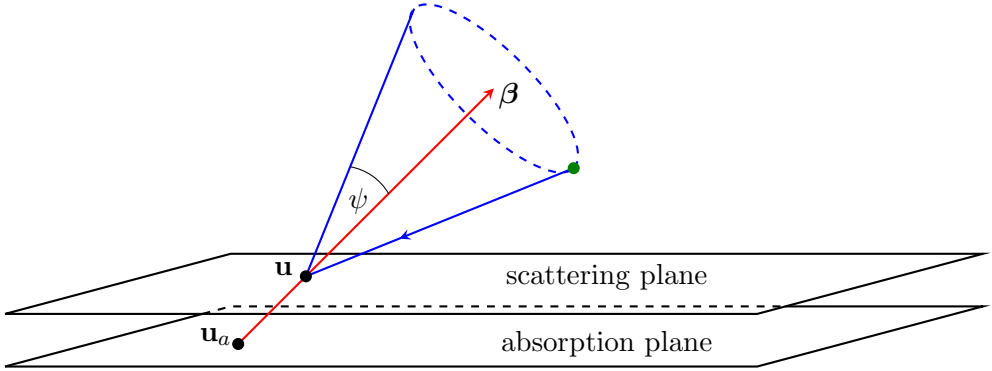

Among others, inverting the conical Radon transform   is relevant for Compton camera imaging.  
A Compton camera (also called electronically collimated $\gamma$-camera) has been  proposed  in \cite{singh83,toddne74} for single photon emission computed tomography (SPECT)  offering  increased efficiency compared  to a conventional $\gamma$-camera.
A standard Compton camera consists of two planar detectors: a scatter detector and an absorption detector, positioned one behind the other.  A photon emitted from a radioactive source toward the camera undergoes Compton scattering in the scatter detector, and is absorbed in the absorption detector positioned behind (see Figure~\ref{fig:compton}). 
In each detector plane, the positions $\uu$, $\uu_{\rm a}$ and the energy of the photon are measured.
The energy difference determines the scattering angle $\psi$ under which the photon path has been scattered at the scattering detector. Therefore, the measurements allow  to conclude that photon  has been emitted  on a conical surface with vertex $\uu$, axis direction pointing form $\uu_{\rm a}$ to $\uu$ and opening angle $\psi$. In a similar manner, assuming a continuous  source distribution of emitting photons, the  Compton camera yields the conical Radon transform  of source distribution with vertices restricted to the scattering plane. The corresponding data  depend on five variables.

Instead of planar Compton cameras in this paper we consider linear Compton cameras consisting of two parallel detector lines (left image in Figure~\ref{fig:comptonwithline}). 
Basically, the data acquisition with linear detectors is the same as in the standard one. The only difference is that the vertices  are restricted to the one-dimensional scattering  detector and the axes directions are restricted to the one-dimensional set of all directions  pointing from the linear absorption detector to the linear scattering detector.  Thus, the corresponding data depend on three  variables and thus are no longer over-determined.  
As the main theoretical result of this paper, we derive an analytic inversion formula for triple linear sensor.
As shown in the right image in Figure~\ref{fig:comptonwithline}, the  triple line sensor consists  three one dimensional vertex sets $\XXi_1$,  $\XXi_2$,  $\XXi_3$ each associated with  a one-dimensional set  axis directions.

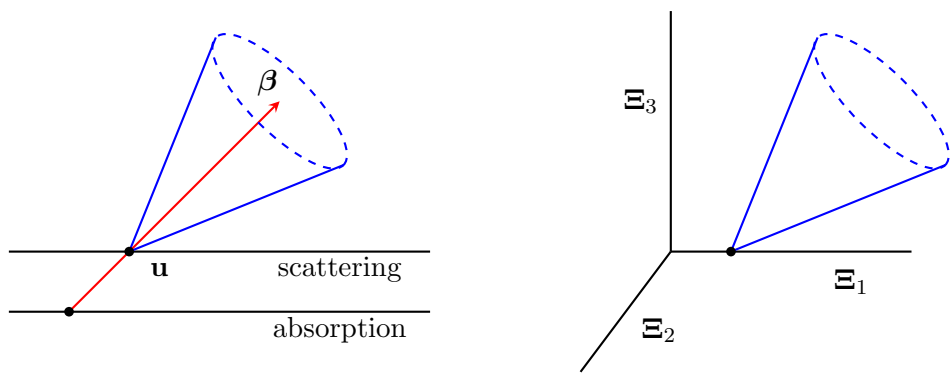
\begin{figure}[htb!]
\begin{center}
  \begin{tikzpicture}[>=stealth,scale=0.8]
  

   \draw[thick] (-1,0) -- (6,0) ;
      \draw[thick] (-1,1) -- (6,1) ;
   \draw[blue,thick] (3.5+1.5/1.4141,3.5-1.5/1.4141) -- (1,1) ;
   \draw[blue,thick] (1,1) -- (3.5-1.5/1.4141,3.5+1.5/1.4141) ;
   \draw[red,thick,->] (0,0) -- (3.5,3.5) ;
   
         \fill[black]  (1,1) circle (0.08);
         \fill[black]  (0,0) circle (0.08);

   \draw[blue,rotate around={-45:(3.5,3.5)},dashed,thick] (5,3.5) arc (0:360:1.5cm and 0.5cm);
   \node at (4.5,0.7) {scattering};
   \node at (4.5,-0.3) {absorption};
   \node at (1.5,0.7) {$\uu$};
   \node at (3.3,3.8) {$\bbeta$};
   
  \draw[thick] (10,1) -- (14,1) ;  \node at (13,0.5) {$\XXi_1$};  
  \draw[thick] (10,1) -- (8.5,-1) ; \node at (9.8,-0.3) {$\XXi_2$}; 
  \draw[thick]  (10,1) -- (10,5) ; \node at (9.5, 3.5) {$\XXi_3$};

   \draw[blue,thick] (10+3.5+1.5/1.4141,3.5-1.5/1.4141) -- (11,1) ;
   \draw[blue,thick] (11,1) -- (10+3.5-1.5/1.4141,3.5+1.5/1.4141) ;
     \fill[black]  (11,1) circle (0.08);
   \draw[blue,rotate around={-45:(13.5,3.5)},dashed,thick] (15,3.5) arc (0:360:1.5cm and 0.5cm);   

  \end{tikzpicture}
  \caption{Left: Schematic representation of a Compton camera with line detectors. 
  Right: Triple line detector consisting of three orthogonal lines.}\label{fig:comptonwithline}
\end{center}
\end{figure}

In practice it may  be easier to build linear detectors than planar detectors because the former requires less physical space and less complicated electronics.
Moreover, to the high dimensionality  of the data obtained from the planar detectors methods only using  partial data have been derived (see e.g. \cite{baskozg98, cebeiromn15, creeb94, haltmeier14, moonsima16, nguyentg05, smith05}).
On the other hand, Compton camera data are considerably noisy and utilizing full 
five-dimensional data is advised to obtain accurate reconstruction results  \cite{allmarasdhkk13,feng2019}. 
However, the data of planar sensors can be grouped in data sets of several virtual linear detectors. 
Therefore, inversion methods for line  detectors can  be applied to give several reconstructions 
for planar detectors that can be aggregated for noise reduction. 

The rest of this paper is organized as follows. The conical Radon transform  with triple 
linear detector in introduced in Section~\ref{sec:formulation}. 
In Section~\ref{sec:inversion}  we derive an analytic inversion formula.  As main ingredient of 
 the proof we reduce the conical Radon transform to a  weighted  ray transform  and proof a novel inversion formula for this ray transform.  The paper concludes with a short  summary 
and outlook presented in Section~\ref{sec:conclusion}.   

\section{The conical Radon transform}
\label{sec:formulation}

In this section, we formally define the  conical Radon transform with vertices on triple lines.
Let 
\begin{equation}\label{eq:xi}
	\XXi = \XXi_1 \cup \XXi_2 \cup \XXi_3
\end{equation} 
be the set of vertices where
\begin{equation} \label{eq:xi1}
\begin{aligned}
\XXi_1 &\coloneqq  \{(y_1,0,0) \colon y_1\in[0,1]\}  \\ 
\XXi_2 &\coloneqq   \{(0,y_2,0) \colon y_2\in[0,1]\} \\ 
\XXi_3 &\coloneqq   \{(0,0,y_3) \colon y_3\in[0,1]\} \,.
\end{aligned}
\end{equation}
We denote by  $\psi\in [0,\pi]$  the opening angle of the circular cone and consider for each of the 
sets $\XXi_j$ a different one-dimensional set of central axes.       
More precisely we parametrize each of the sets of central axes with  
$\bar \bbeta = (\beta_1,\beta_2) \in \sph^1$ and define  
\begin{equation} \label{eq:beta}
\bbeta_{\uu}=
\begin{cases}
(\beta_1,0,\beta_2)  & \text{ if   $\uu\in \XXi_1$} \\
(0,\beta_1,\beta_2)  & \text{ if   $\uu\in \XXi_2$} \\
(\beta_1,0,\beta_2)  & \text{ if   $\uu\in \XXi_3$}  \,.
\end{cases}
\end{equation}
Let $f \colon \RR^3  \to \RR$ be the distribution of the radioactivity sources.

For the following it is  convenient to work with  $s = \cos\psi\in[-1,1]$. We then 
define the conical Radon transform $\Co_k f$ of a function $f\in C(\RR^3)$ with compact support 
as follows.

\begin{defi}[Conical Radon transform]
For given $k \in \NN$ we define the  conical Radon transform   
$\Co_k f \colon \XXi \times \sph^1 \times \RR \to \RR$ with vertices on triple lines by 
\begin{equation}\label{eq:Ckf}
  \Co_{k}f(\uu,\bar{\boldsymbol\beta},\psi)
  \coloneqq 
  \begin{cases}
  \int_{\sph^2}
  \int^\infty_0 f(\uu+r\boldsymbol\alpha )r^{k}\delta(\boldsymbol\alpha\cdot \bbeta_\uu-s) \; \rmd r \; {\rm d}S(\aalpha) & \text{ if  $s \in [-1, 1]$}\\
  0 & \text{ otherwise}\,. 
\end{cases}
\end{equation}
Here  $\XXi $ and $\bbeta_\uu$ are   defined by \eqref{eq:xi}-\eqref{eq:beta},
$\delta$ is the one-dimensional delta-distribution and  ${\rm d}S$ is the standard area measure 
on the unit sphere $\sph^2$,
\begin{equation*}
{\rm d}S(\aalpha) = \delta\Big(1-\sqrt{\alpha_1^2+\alpha_2^2+\alpha_3^2}\Big)\rmd\alpha_1\rmd\alpha_2\rmd\alpha_3 \quad 
\text{ for  }
\aalpha = (\alpha_1,\alpha_2,\alpha_3)\in \mathbb{R}^3 \,.
\end{equation*} 
 \end{defi}

Assuming  that the density of photons decreases geometrically and proportional  to the distance 
from the source  to detectors, then the data  measured by a Compton camera are   given by the transform $\Co_1 f$.  When the density decreases at a different power of distance, we need  different values of $k$, see  \cite{moonip15}.

%

\section{Exact inversion formula}
\label{sec:inversion}

In this section we derive  an explicit inversion formula for the conical Radon  transform.
Along that way we introduce a weighted ray transform,  
show how the conical Radon transform can be reduced to the weighted ray transform and 
derive an explicit inversion formula for the weighted ray transform. 

\begin{defi}[Weighted ray transform]
Let $\XXi$ be defined by  \eqref{eq:xi} and set $\cone (\XXi)
\coloneqq \{\xx\in[0,1]^3 \colon x_1+x_2+x_3\leq1\}$.
We define the weighted ray transform $\Po_k f \colon \XXi \times \RR^3 \to \RR$ of  a continuous  compactly supported 
function $f \colon \RR^3 \to \RR$  by 
\begin{equation}\label{d-p}
\forall (\uu,\ww)  \in \XXi \times \RR^3 \colon \quad
\Po_kf(\uu,\ww) :=\intL\half f(\uu+r\ww) r^{k}  \rmd r \,.
\end{equation}
\end{defi}

It is easy to check that $\Po_k f$ is homogeneous of degree $- (k+1)$ in the variable of $\ww$, i.e., $\Po_kf(\uu,\ww)=|\ww|^{-(k+1)} \Po_kf(\uu,\ww / |\ww|)$ for $\ww\neq0$. 


\begin{lem}[Reduction of   the conical Radon transform to the ray transform]\label{lem:pkffromckf}
For $ f \in C^\infty(\RR^3)$ with compact support in $\cone (\XXi)$, we have
\begin{equation}\label{eq:wrtn2}
\forall (\uu,\ww)\in \XXi\times\RR^3 \colon \quad 
\Po_kf(\uu,\ww)=\frac{w_{\uu}}{4\pi|\ww|^{k+2}}\displaystyle\intL_{\sph^1}H_{s} \partial_s^{}(\Co_kf) \left(\uu,\bar\bbeta,\bar\bbeta\cdot\frac{\bar \ww_{\uu}}{|\ww|}\right) {\rm d}S(\bar\bbeta) \,.
\end{equation}
Here $H_s f (s) = \frac{1}{\pi} \int_{\RR} \frac{f(t)}{s-t} {\rm d}t$ is the Hilbert transform
and we have written  
 \begin{equation*}
\bar\ww_{\uu}
\coloneqq 
\begin{cases}
(w_1,w_3) \in \RR^2 &\text{ if } \uu \in   \XXi_1\\
(w_2,w_3) \in \RR^2 &\mbox{ if } \uu \in  \XXi_2\\
(w_1,w_3) \in \RR^2 &\mbox{ if } \uu \in   \XXi_3 
\end{cases}
\end{equation*}
and $w_{\uu}$ for the component of $\ww$ missing in $\bar\ww_{\uu}$.
\end{lem}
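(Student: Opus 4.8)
The plan is to read the cone integral as a family of circle integrals of the ray transform over $\sph^2$, project those circles onto the two–plane that carries all admissible axes, and thereby reduce the problem to inverting an ordinary two–dimensional Radon transform by filtered backprojection. I treat the case $\uu\in\XXi_1$ in detail; the cases $\uu\in\XXi_2,\XXi_3$ are identical after a coordinate permutation, since in each case the axes $\bbeta_\uu$ lie in a fixed coordinate plane and $w_\uu$ is the coordinate normal to that plane.

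First I would recognize the inner radial integral in \eqref{eq:Ckf} as the ray transform: because $\delta(\aalpha\cdot\bbeta_\uu-s)$ is independent of $r$, it may be pulled outside, giving
\begin{equation*}
\Co_k f(\uu,\bar\bbeta,\psi)=\int_{\sph^2}\Po_k f(\uu,\aalpha)\,\delta(\aalpha\cdot\bbeta_\uu-s)\,\rmd S(\aalpha),
\end{equation*}
so that $\Co_k f$ is the integral of $\aalpha\mapsto\Po_k f(\uu,\aalpha)$ over the circle $\{\aalpha\in\sph^2:\aalpha\cdot\bbeta_\uu=s\}$. For $\uu\in\XXi_1$ one has $\bbeta_\uu=(\beta_1,0,\beta_2)$, so the constraint reads $(\alpha_1,\alpha_3)\cdot\bar\bbeta=s$ and does not involve $\alpha_2$. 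Writing each hemisphere of $\sph^2$ as a graph over the unit disk $D$ via $p=(\alpha_1,\alpha_3)$ and $\alpha_2=\pm\sqrt{1-|p|^2}$, the surface measure becomes $\rmd S=\rmd p/\sqrt{1-|p|^2}$, and the circle integral turns into the line integral
\begin{equation*}
\Co_k f(\uu,\bar\bbeta,\psi)=\int_D G(p)\,\delta(\bar\bbeta\cdot p-s)\,\rmd p=\mathcal R G(\bar\bbeta,s),
\end{equation*}
the two–dimensional Radon transform of $G(p):=\bigl(g_+(p)+g_-(p)\bigr)/\sqrt{1-|p|^2}$, where $g_\pm(p):=\Po_k f\bigl(\uu,(p_1,\pm\sqrt{1-|p|^2},p_2)\bigr)$.

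The key observation is that the support hypothesis collapses the two–valued projection. Since $f$ is supported in $\cone(\XXi)\subset[0,1]^3$ and the vertex $\uu\in\XXi_1$ has vanishing second coordinate, any ray $\uu+r\aalpha$ with $\alpha_2<0$ leaves the support for every $r>0$; hence $g_-\equiv0$ and $G(p)=g_+(p)/\sqrt{1-|p|^2}$. I would then apply the filtered–backprojection inversion of the planar Radon transform,
\begin{equation*}
G(p)=\frac{1}{4\pi}\int_{\sph^1}H_s\partial_s(\mathcal R G)(\bar\bbeta,\bar\bbeta\cdot p)\,\rmd S(\bar\bbeta),
\end{equation*}
evaluated at $p=\bar\ww_\uu/|\ww|$ and with $\mathcal R G(\bar\bbeta,\cdot)=\Co_k f(\uu,\bar\bbeta,\cdot)$ understood as a function of $s=\cos\psi$.

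Finally I would undo the two normalizations. From $|p|^2=|\bar\ww_\uu|^2/|\ww|^2$ one gets $\sqrt{1-|p|^2}=|w_\uu|/|\ww|$, hence $g_+(p)=\sqrt{1-|p|^2}\,G(p)$; and by the homogeneity of degree $-(k+1)$ noted after the definition of $\Po_k$, for $\ww$ pointing into the support ($w_\uu>0$) we have $\Po_k f(\uu,\ww)=|\ww|^{-(k+1)}g_+(p)$. Combining these with the displayed inversion produces the prefactor $w_\uu/(4\pi|\ww|^{k+2})$ and the claimed identity \eqref{eq:wrtn2}. I expect the support step together with the bookkeeping of the last step to be the main obstacle: one must use the vertex geometry to discard exactly one hemisphere and then track the weight $1/\sqrt{1-|p|^2}$ against the homogeneity exponent so that Jacobian and scaling combine precisely into $w_\uu/|\ww|^{k+2}$. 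Verifying that the constant $1/(4\pi)$ of the planar inversion is correct, and that the integrable boundary weight $1/\sqrt{1-|p|^2}$ does not obstruct the applicability of the Radon inversion (which the smoothness and compact support of $f$ secure), are the remaining points to confirm.
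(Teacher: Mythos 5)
Your proposal is correct and is essentially the argument the paper itself relies on: the paper omits the proof and instead cites Theorem 5 of \cite{moonsiims16}, whose proof proceeds exactly as yours does --- pull the $\delta$ through the radial integral to read $\Co_kf(\uu,\cdot,\cdot)$ as the planar Radon transform over the unit disk of $(1-|p|^2)^{-1/2}\,\Po_kf$ restricted to a hemisphere, discard the opposite hemisphere via the support hypothesis, and conclude by two-dimensional filtered backprojection together with the homogeneity of $\Po_kf$. Your explicit restriction to $w_{\uu}>0$ in the final step is also the correct reading of the statement, since the right-hand side of \eqref{eq:wrtn2} is odd in $w_{\uu}$ while $\Po_kf(\uu,\ww)$ vanishes for $w_{\uu}<0$, so the asserted ``for all $\ww\in\RR^3$'' holds with the stated prefactor only for $w_{\uu}\geq 0$.
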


We omit the proof here since it can be similarly proved
as in that of Theorem 5 in \cite{moonsiims16} with some minor modification. 
For more details, we refer the readers to \cite{moonsiims16}.

Now we are ready to obtain the inversion formula for $\Co_kf$.

\begin{thm}[Inversion formula for the conical Radon transform]
\label{thm:inversionofck}
Define  the constant  $ c_k \coloneqq \frac{1}{2^2(2\pi)^3(k-1)!}$ and let 
$f\in C^\infty(\RR^3)$ have compact support in $\cone(\boldsymbol\Xi)$.
Then,
\begin{equation} \label{eq:main}
\begin{array}{ll}
f(\xx)\displaystyle=c_k \Delta_{\xx}\left\{\begin{array}{ll}\displaystyle
\intL_{\sph^2}\intL_{\mathbf n^\perp\cap \sph^2}\intL^{\alpha_1}_{-\infty}\intL_{\sph^1}
\displaystyle\frac{\alpha_2(\alpha_1-\omega)^{k-2}}{|(\omega,\alpha_2,\alpha_3)|^{k+2}}
(\partial_{u_1}^{k-1} H_{s} \partial_s^{}\Co_kf)\left(\Lambda(\xx,\mathbf n),\bar\bbeta,\bar\bbeta\cdot(\omega,\alpha_3)\right)\\
\qquad\qquad \left.\times{\rm d}\omega {\rm d}S(\bar\bbeta) {\rm d}S(\aalpha){\rm d}S(\mathbf n)\right.\hspace{0.1\textwidth} \text{ if } \Lambda(\xx,\mathbf n) \in  \XXi_1\\
 \displaystyle\intL_{\sph^2}\intL_{\mathbf n^\perp\cap \sph^2}\intL^{\alpha_2}_{-\infty}\intL_{\sph^1}
\displaystyle\frac{\alpha_1(\alpha_2-\omega)^{k-2}}{|(\omega,\alpha_2,\alpha_3)|^{k+2}}
(\partial_{u_2}^{k-1} H_{s} \partial_s^{}\Co_kf)\left(\Lambda(\xx,\mathbf n),\bar\bbeta,\bar\bbeta\cdot(\omega,\alpha_3)\right)\\
\qquad\qquad\left.\times{\rm d}\omega {\rm d}S(\bar\bbeta) {\rm d}S(\aalpha){\rm d}S(\mathbf n)\right.\hspace{0.1\textwidth} \text{ if } \Lambda(\xx,\mathbf n) \in \XXi_2\\
\displaystyle
\intL_{\sph^2}\intL_{\mathbf n^\perp\cap \sph^2}\intL^{\alpha_3}_{-\infty}\intL_{\sph^1}
\displaystyle\frac{\alpha_2(\alpha_3-\omega)^{k-2}}{|(\alpha_1,\alpha_2,\omega)|^{k+2}}
(\partial_{u_3}^{k-1} H_{s} \partial_s^{}\Co_kf)\left(\Lambda(\xx,\mathbf n),\bar\bbeta,\bar\bbeta\cdot(\alpha_1,\omega)\right)\\
\qquad\qquad\left.\times{\rm d}\omega {\rm d}S(\bar\bbeta) {\rm d}S(\aalpha){\rm d}S(\mathbf n)\right.\hspace{0.1\textwidth} \text{ if } \Lambda(\xx,\mathbf n) \in  \XXi_3 \,,\end{array}\right.
\end{array}
\end{equation}
where $\Lambda(\xx,\mathbf n)$ is any point in $\XXi\cap \{\yy\in\RR^3 \colon (\xx-\yy)\cdot\mathbf n=0\}$.
\end{thm}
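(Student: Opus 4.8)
The plan is to prove \eqref{eq:main} in two stages: first reduce the conical transform to the weighted ray transform by means of Lemma~\ref{lem:pkffromckf}, and then establish and apply an inversion formula for $\Po_k$. Because the configuration \eqref{eq:xi1}--\eqref{eq:beta} is symmetric under permutation of the coordinate axes, it suffices to treat the case $\Lambda(\xx,\mathbf n)\in\XXi_1$; the cases $\XXi_2,\XXi_3$ then follow after relabeling. Throughout write $\uu=\Lambda(\xx,\mathbf n)$ and recall that, by definition of $\Lambda$, $(\xx-\uu)\cdot\mathbf n=0$, hence $\uu\cdot\mathbf n=\xx\cdot\mathbf n$. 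First I would apply Lemma~\ref{lem:pkffromckf} to the innermost $\sph^1$-integral in \eqref{eq:main}. Since $\partial_{u_1}$, $H_s$ and $\partial_s$ commute with integration over $\bar\bbeta$ and act neither on $|\ww|$ nor on the missing component $w_\uu$, the lemma converts the $\bar\bbeta$-integral (with weight $\alpha_2/|(\omega,\alpha_2,\alpha_3)|^{k+2}$) into a constant multiple of $\partial_{u_1}^{k-1}\Po_k f(\uu,\ww)$ with $\ww=(\omega,\alpha_2,\alpha_3)$, using that $w_\uu=\alpha_2$ is the component of $\ww$ omitted from $\bar\ww_\uu=(\omega,\alpha_3)$ and that $\Po_k$ is homogeneous of degree $-(k+1)$. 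After this substitution the right-hand side of \eqref{eq:main} becomes, up to the explicit constant,
\[ \Delta_\xx\intL_{\sph^2}\intL_{\mathbf n^\perp\cap\sph^2}\intL_{-\infty}^{\alpha_1}(\alpha_1-\omega)^{k-2}\,\partial_{u_1}^{k-1}\Po_k f\bigl(\uu,(\omega,\alpha_2,\alpha_3)\bigr)\,{\rm d}\omega\,{\rm d}S(\aalpha)\,{\rm d}S(\mathbf n), \]
so the theorem reduces to an inversion formula for $\Po_k$.

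The core is to invert $\Po_k$, which I would do in three steps. (i) Fix $\mathbf n$ and let $\aalpha$ range over the great circle $\mathbf n^\perp\cap\sph^2$, so that the rays $\{\uu+r\aalpha\}$ sweep out the plane $P=\{\yy:\yy\cdot\mathbf n=\uu\cdot\mathbf n\}$ through $\uu$. Writing the planar integral in polar coordinates centred at $\uu$ (with area element $r\,{\rm d}r\,{\rm d}\phi$) gives the Grangeat-type identity
\[ \intL_{\mathbf n^\perp\cap\sph^2}\Po_k f(\uu,\aalpha)\,{\rm d}S(\aalpha)=\intL_P f(\yy)\,|\yy-\uu|^{k-1}\,{\rm d}A(\yy), \]
which for $k=1$ is exactly the plane integral $\intL_P f\,{\rm d}A$ of the classical three-dimensional Radon transform $Rf(\mathbf n,\uu\cdot\mathbf n)$. (ii) I would strip the weight $|\yy-\uu|^{k-1}$ using the differentiation identity $\partial_\omega\Po_k f(\uu,(\omega,\alpha_2,\alpha_3))=\partial_{u_1}\Po_{k+1}f(\uu,(\omega,\alpha_2,\alpha_3))$, which follows by differentiating under the integral sign from $\partial(\uu+r\ww)/\partial\omega=r\,e_1=r\,\partial(\uu+r\ww)/\partial u_1$. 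Iterating yields $\partial_{u_1}^{k-1}\Po_k f=\partial_\omega^{k-1}\Po_1 f$, and the kernel $(\alpha_1-\omega)^{k-2}$ is precisely the Riemann--Liouville kernel of the $(k-1)$-fold antiderivative, which inverts $\partial_\omega^{k-1}$; the boundary contributions at $\omega\to-\infty$ vanish by the homogeneity decay of $\Po_1 f$. Hence the $\omega$-integral collapses to a constant multiple of $\Po_1 f(\uu,\aalpha)$ (the antiderivative evaluated at $\omega=\alpha_1$, i.e. at the unit direction $\aalpha$). (iii) Combining (i) and (ii), the two inner integrals reproduce $Rf(\mathbf n,\xx\cdot\mathbf n)$, and the outer operator $\Delta_\xx\intL_{\sph^2}(\,\cdot\,)\,{\rm d}S(\mathbf n)$ is exactly the filtered backprojection of the odd-dimensional Radon inversion
\[ f(\xx)=-\tfrac{1}{8\pi^2}\,\Delta_\xx\intL_{\sph^2}Rf(\mathbf n,\xx\cdot\mathbf n)\,{\rm d}S(\mathbf n), \]
which recovers $f$.

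A preliminary point is the geometric completeness that makes the case distinction in \eqref{eq:main} meaningful: for every $\xx\in\cone(\XXi)$ and every $\mathbf n\in\sph^2$, the plane $\{\yy:(\xx-\yy)\cdot\mathbf n=0\}$ must meet $\XXi$, so that $\Lambda(\xx,\mathbf n)$ exists. Here $\cone(\XXi)$ is the tetrahedron with vertices $0,e_1,e_2,e_3$ and the sets $\XXi_1,\XXi_2,\XXi_3$ are its three edges at the origin. A plane through an interior point splits the four vertices into two nonempty groups; the origin cannot lie on the same side as all of $e_1,e_2,e_3$, so for at least one $j$ the edge $\XXi_j$ joins vertices on opposite sides and is therefore crossed at an interior point. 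The crossed edge is exactly the one selecting the corresponding branch of \eqref{eq:main}.

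I expect the main obstacle to be the exact bookkeeping of constants and the reconciliation of the direction normalization. The subtle step is that the $s$-argument $\bar\bbeta\cdot(\omega,\alpha_3)$ in \eqref{eq:main} must be matched, through the homogeneity of $\Po_k$, with the unit-direction slot $\bar\bbeta\cdot(\bar\ww_\uu/|\ww|)$ of Lemma~\ref{lem:pkffromckf}; getting this reconciliation right is what converts the naive Riemann--Liouville normalization $1/(k-2)!$ and the Radon constant $-1/(8\pi^2)$ into the asserted $c_k=\tfrac{1}{2^2(2\pi)^3(k-1)!}$. Alongside this I would have to justify the interchange of the (singular) $\omega$-integration, the Hilbert transform, and the differentiations $\partial_{u_1}^{k-1},\partial_s$, which is where the smoothness and compact-support hypotheses on $f$, together with the homogeneity decay of $\Po_k f$, enter. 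The geometric and analytic mechanisms described above are otherwise routine.
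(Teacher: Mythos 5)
Your proposal is correct and follows essentially the same route as the paper's proof: your iterated identity $\partial_\omega^{k-1}\Po_1f=\partial_{u_1}^{k-1}\Po_kf$ combined with the Riemann--Liouville kernel $(\alpha_1-\omega)^{k-2}$ is precisely the paper's chain-rule-plus-Cauchy-repeated-integration step, your step (i) is the paper's polar-coordinate identity $Rf(\mathbf n,\xx\cdot\mathbf n)=\intL_{\mathbf n^\perp\cap \sph^2}\Po_1f\left(\Lambda(\xx,\mathbf n),\aalpha\right){\rm d}S(\aalpha)$, and both arguments conclude with the classical filtered-backprojection inversion of the three-dimensional Radon transform together with the substitution from Lemma~\ref{lem:pkffromckf} (applied last in the paper, first by you, which is immaterial), with your tetrahedron-separation argument for the existence of $\Lambda(\xx,\mathbf n)$ being a sound addition that the paper leaves implicit. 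The normalization issues you flag are genuine but cut against the paper rather than against you: the standard odd-dimensional inversion carries the sign $-\tfrac{1}{8\pi^2}$ that you use whereas \eqref{eq:inversionofradon3} states it without, $(k-1)$-fold integration yields $\tfrac{1}{(k-2)!}$ rather than the paper's $\tfrac{1}{(k-1)!}$, and the unnormalized argument $\bar\bbeta\cdot(\omega,\alpha_3)$ in \eqref{eq:main} indeed requires the reconciliation with Lemma~\ref{lem:pkffromckf}'s unit-direction slot $\bar\bbeta\cdot\bar\ww_{\uu}/|\ww|$ that the paper glosses over with ``we have our assertion.''
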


\begin{proof}
Notice that by the chain rule, we have for $\uu\in\XXi_1$
$$
\begin{array}{ll}
(\partial_{w_1}^{k-1}\Po_1f)(\uu,\ww)&\displaystyle=\intL^\infty_0 r\partial^{k-1}_{w_1}f(\uu+r\ww){\rm d} r\\
&\displaystyle=\intL^\infty_0 \partial^{k-1}_{u_1}f(\uu+r\ww)r^k{\rm d} r=(\partial_{u_1}^{k-1} \Po_k f)(\uu,\ww)\,.
\end{array}
$$
Similarly, we have $(\partial_{w_2}^{k-1}\Po_1f)(\uu,\ww)=(\partial_{u_2}^{k-1} \Po_k f)(\uu,\ww)$ for $\uu\in\{0\}\times[0,1]\times\{0\}$ and
$(\partial_{w_3}^{k-1}\Po_1f)(\uu,\ww)=(\partial_{u_3}^{k-1} \Po_k f)(\uu,\ww)$ for $\uu\in\XXi_3$.
Together with Cauchy's formula for repeated integration we obtain
$$
\Po_1f(\uu,\ww)=\frac{1}{(k-1)!}\left\{\begin{array}{ll}\displaystyle \intL^{w_1}_{-\infty}(w_1-\omega)^{k-2}(\partial_{u_1}^{k-1} \Po_k f)(\uu,\omega,w_2,w_3){\rm d}\omega 
&\text{ if }  \uu \in  \XXi_1\\
\displaystyle\intL^{w_2}_{-\infty}(w_2-\omega)^{k-2}(\partial_{u_2}^{k-1} \Po_k f)(\uu,w_1,\omega,w_3){\rm d}\omega
&\text{ if } \uu \in \XXi_2\\
\displaystyle\intL^{w_3}_{-\infty}(w_3-\omega)^{k-2}(\partial_{u_3}^{k-1} \Po_k f)(\uu,w_1,w_2,\omega){\rm d}\omega
&\text{ if }  \uu \in  \XXi_3 \,.
\end{array}\right.
$$

Recall that $Rf$ be the 3-dimensional regular Radon transform, i.e., 
$$Rf(\mathbf n,s)
= \int_{\{ \xx\cdot \mathbf n = s\}} f(\xx) {\rm d}\xx
\qquad \text{for} \quad(\mathbf n,s)\in \sph^{2}\times\RR.$$ 
Then, using the polar coordinates, one can easily verify that 
\begin{equation}\label{eq:rffrompf}
\begin{array}{ll}
Rf(\mathbf n,\xx \cdot \mathbf n)
&\displaystyle=\intL_0^\infty \intL_{\mathbf n^\perp\cap \sph^2} f(\Lambda(\xx,\mathbf n)+ r \aalpha ) r {\rm d}S(\aalpha) {\rm d}r 
=\intL_{\mathbf n^\perp\cap \sph^2}\Po_1f\left(\Lambda(\xx,\mathbf n),\aalpha\right){\rm d}S(\aalpha)\\
&\displaystyle=\frac{1}{(k-1)!}\left\{\begin{array}{ll}\displaystyle\intL_{\mathbf n^\perp\cap \sph^2}\intL^{\alpha_1}_{-\infty}(\alpha_1-\omega)^{k-2}(\partial_{u_1}^{k-1} \Po_k f)(\Lambda(\xx,\mathbf n),\omega,\alpha_2,\alpha_3){\rm d}\omega{\rm d}S(\aalpha)\\\quad\hspace{0.1\textwidth} \text{ if } \Lambda(\xx,\mathbf n) \in  \XXi_1\\
\displaystyle\intL_{\mathbf n^\perp\cap \sph^2}\intL^{\alpha_2}_{-\infty}(\alpha_2-\omega)^{k-2}(\partial_{u_2}^{k-1} \Po_k f)(\Lambda(\xx,\mathbf n),\alpha_1,\omega,\alpha_3){\rm d}\omega{\rm d}S(\aalpha)\\\quad\hspace{0.1\textwidth} \text{ if } \Lambda(\xx,\mathbf n) \in\XXi_2\\
\displaystyle\intL_{\mathbf n^\perp\cap \sph^2}\intL^{\alpha_3}_{-\infty}(\alpha_3-\omega)^{k-2}(\partial_{u_3}^{k-1} \Po_k f)(\Lambda(\xx,\mathbf n),\alpha_1,\alpha_2,\omega){\rm d}\omega{\rm d}S(\aalpha)\\\quad\hspace{0.1\textwidth} \text{ if } \Lambda(\xx,\mathbf n) \in\XXi_3 \,.
\end{array}\right.
\end{array}
\end{equation}
We have the well-known inversion formula for $Rf$, i.e.,  for $\xx\in\RR^3$,
\begin{equation}\label{eq:inversionofradon3}
f(\xx)=\frac{1}{8\pi^2}\Delta_{\xx}\left(\intL_{\sph^2}Rf(\mathbf n , \xx\cdot \mathbf n ){\rm d}S(\mathbf n)\right).
\end{equation}
This implies that the function $f(\xx)$ is determined  by the integrals of  $Rf$
over the hyperplanes passing through the point $\xx$.  
Plugging \eqref{eq:rffrompf} into \eqref{eq:inversionofradon3}, we obtain the inversion formula for $f$ in terms of $\Po_1f$ as
\begin{equation}
\begin{split}
f(\xx) &=\frac{1}{8\pi^2(k-1)!}\Delta_{\xx}\left\{\begin{array}{ll}\displaystyle
\intL_{\sph^2}\intL_{\mathbf n^\perp\cap \sph^2}\intL^{\alpha_1}_{-\infty}(\alpha_1-\omega)^{k-2}(\partial_{u_1}^{k-1} \Po_k f)(\Lambda(\xx,\mathbf n),\omega,\alpha_2,\alpha_3){\rm d}\omega{\rm d}S(\aalpha){\rm d}S(\mathbf n)\\\quad\hspace{0.1\textwidth} \text{ if } \Lambda(\xx,\mathbf n) \in  \XXi_1\\
\displaystyle\intL_{\sph^2}\intL_{\mathbf n^\perp\cap \sph^2}\intL^{\alpha_2}_{-\infty}(\alpha_2-\omega)^{k-2}(\partial_{u_2}^{k-1} \Po_k f)(\Lambda(\xx,\mathbf n),\alpha_1,\omega,\alpha_3){\rm d}\omega{\rm d}S(\aalpha){\rm d}S(\mathbf n)\\\quad\hspace{0.1\textwidth} \text{ if } \Lambda(\xx,\mathbf n) \in\XXi_2\\
\displaystyle\intL_{\sph^2}\intL_{\mathbf n^\perp\cap \sph^2}\intL^{\alpha_3}_{-\infty}(\alpha_3-\omega)^{k-2}(\partial_{u_3}^{k-1} \Po_k f)(\Lambda(\xx,\mathbf n),\alpha_1,\alpha_2,\omega){\rm d}\omega{\rm d}S(\aalpha){\rm d}S(\mathbf n)\\\quad\hspace{0.1\textwidth} \text{ if } \Lambda(\xx,\mathbf n) \in\XXi_3 \,.
\end{array}\right.
\end{split}
\end{equation}
Using Lemma \ref{lem:pkffromckf}, we have our assertion.
\end{proof}

\begin{rmk}[Generalization  to different vertex sets] 
We point out that an  inversion formula similar to \eqref{eq:main} can also 
be derived for other arrangements triple line detectors.  In such a case,  one  
reconstructs a function $f\in C^\infty(\RR^3)$ with compact support in a certain set $K$ depending on 
$\XXi$  by deriving  generalizations of Lemma~\ref{lem:pkffromckf} and Theorem~\ref{thm:inversionofck}. For such results,  the following condition has to be satisfied: 
For every  $\xx \in K$, every plane passing through $\xx$ intersects the vertex set $\XXi$. 
\end{rmk}

\section{Conclusion}
 \label{sec:conclusion}

In this paper we derived an explicit inversion formula for inverting the conical Radon transform with vertices on triple lines. The considered geometry does not use formally over-determinated data and uses a bounded vertex set. As main auxiliary result we derived an inversion formula for a ray transform adjusted to the triple linear detector. While the used data was motivated by SPECT imaging  with one-dimensional Compton cameras our results are applicable to other settings as well. In future work we will investigate then  numerical implementation of the derived inversion approach and compare with other inversion methods.      
\section*{Acknowledgement}
The work of S. M. was supported by the National Research Foundation of Korea grant funded by the Korea government (MSIP) (2018R1D1A3B07041149). 

\bibliographystyle{plain}

\end{document}